\documentclass[12pt,a4paper,oneside,reqno,notitlepage]{amsart}
\usepackage{amssymb}

\topmargin=-.5cm\textheight=23.cm\textwidth=16.cm
\oddsidemargin=-0.25cm \evensidemargin=-0.25cm
\usepackage{amsfonts}
\usepackage{amscd,amsmath}
\usepackage{euscript}
\usepackage{comment}

\usepackage[arrow,matrix,curve]{xy}\SilentMatrices
\def\xyma{\xymatrix@M.7em}

\numberwithin{equation}{section}

\usepackage[T2A]{fontenc}

\newtheorem{prop}{Proposition}[section]
\newtheorem{theorem}{Theorem}[section]
\newtheorem{lemma}{Lemma}[section]
\newtheorem{remark}{Remark}[section]

\def\Ga{\Gamma}

\def\la{\longrightarrow}
\def\ot{\otimes}

\def\bee{\begin{equation}}
\def\ee{\end{equation}}

\def\tor{\mathrm{Tor}}
\def\Tor{\mathrm{Tor}}

\setcounter{section}{1}
\begin{document}
\title{On the homology of the dual de Rham complex}
\author{Roman Mikhailov}
\begin{abstract}
We study the homology of the dual de Rham complex as functors on
the category of abelian groups. We give a description of homology
of the dual de Rham complex up to degree 7 for free abelian groups
and present a corrected version of the proof of Jean's
computations of the zeroth homology group.
\end{abstract}
\maketitle

\subsection{Divided power functor} Let $\sf Ab$ be the category of abelian groups. Recall
the definition of the graded divided power functor (see
\cite{roby}) $\Gamma_*=\bigoplus_{n\geq 0}\Gamma_n: \sf Ab\to \sf
Ab$. The graded abelian group $\Gamma_\ast(A)$ is generated by
symbols $\gamma_i(x)$ of degree $i\geq 0$ satisfying the following
relations for all $x,y \in A$:
\begin{align*} & 1)\ \gamma_0(x) = 1 \\ & 2)\ \gamma_1(x)=x\\ &
3)\
\gamma_s(x)\gamma_t(x)=\binom{s+t}{s}\gamma_{s+t}(x)\\
& 4)\ \gamma_n(x+y)=\sum_{s+t=n}\gamma_s(x)\gamma_t(y),\ n\geq 1\\
& 5)\ \gamma_n(-x)=(-1)^n\gamma_n(x),\ n\geq 1.
\end{align*}
In particular, the canonical map $ A \simeq \Gamma_1(A)$ is an
isomorphism.  The following additional properties of elements of
the abelian group $\Gamma(A)$ will be useful ($x,y\in A,\ r\geq
1$):
\begin{align*}
& \gamma_r(nx)=n^r\gamma_r(x),\ n\in \mathbb Z;\\
& r\gamma_r(x)=x\gamma_{r-1}(x);\\
& x^r=r!\gamma_r(x);\\
& \gamma_r(x)y^r=x^r\gamma_r(y).
\end{align*}
A direct computation implies that
$$
\Gamma_r(\mathbb Z/n)\simeq \mathbb Z/{n(r,n^\infty)},
$$
where $(r,n^\infty)$ is the limit $\lim_{m\to \infty}(r,n^m)$. The
degree 2 component $\Ga_2(A)$ of the divided power algebra is the
Whitehead functor $\Ga (A)$. It is the universal group for
homogenous
 quadratic maps from $A$ into abelian groups.

\subsection{Dual de Rham complex} Let $A$ be an abelian group.
For $n\geq 1,$ denote by $SP^n$ and $\Lambda^n$ the $n$th
symmetric and exterior power functors respectively. For $n\geq 1$,
let $D_*^n(A)$ and $C_*^n(A)$ be the complexes of abelian groups
defined by
\begin{align*}
& D_i^n(A)=SP^i(A)\otimes \Lambda^{n-i}(A),\ 0\leq i\leq n,\\
& C_i^n(A)=\Lambda^i(A)\otimes \Gamma_{n-i}(A),\ 0\leq i\leq n,
\end{align*}
where the differentials $d^i: D_i^n(A)\to D_{i-1}^n(A)$ and $d_i:
C_i^n(A)\to C_{i-1}^n(A)$ are:
\begin{align*}
& d^i((b_1\dots b_i)\otimes b_{i+1}\wedge \dots \wedge b_n)=
\sum_{k=1}^i(b_1\dots \hat b_k \dots b_i)\otimes b_k\wedge
b_{i+1}\wedge\dots \wedge b_n\\
& d_i(b_1\wedge \dots \wedge b_i\otimes X)=\sum_{k=1}^i(-1)^k
b_1\wedge \dots \wedge\hat b_k\wedge \dots \wedge b_i\otimes b_kX
\end{align*}
for any $X\in \Gamma_{n-i}(A)$. The complex $D^n(A)$ is the degree
$n$ component of the classical de Rham complex, first introduced
in the present context of polynomial functors in \cite{FLS} and
denoted $\Omega_n$ in \cite{Franjou}. The dual complexes  $C^n(A)$
were considered in \cite{Jean}. We will call them  the dual de
Rham complexes.

The dual de Rham complexes appear naturally in the theory of
homology of Eilenberg-Mac Lane spaces. Let $A$ be a free abelian
group. There are well-known natural isomorphisms (see, for
example, \cite{Breen}):
\begin{align*}
& H_nK(A,1)= \Lambda^n(A),\ n\geq 1\\
& H_{2n}K(A,2)= \Gamma_n(A),\ n\geq 1\\
& H_{2n+1}K(A,2)=0,\ n\geq 0.
\end{align*}
Consider the path-fibration:
$$
K(A,2)\to PK(A,3)\to K(A,3)
$$
and the homology spectral sequence
$$
E_{p,q}^2=H_pK(A,3)\otimes H_qK(A,2)\Rightarrow \mathbb Z[0]
$$
The dual de Rham complexes can be recognized as natural parts of
the $E^3$-term of this spectral sequence. For example, we have the
following natural diagrams:
$$
\xyma{ E_{6,0}^3\ar@{->}[r]^{d_{6,0}^3}\ar@{=}[d] &
E_{3,2}^3\ar@{->}[r]^{d_{3,2}^3} \ar@{=}[d]& E_{0,4}^3\ar@{=}[d]\\
\Lambda^2(A) \ar@{->}[r]^{d_2} & A\otimes A\ar@{->}[r]^{d_1} &
\Gamma_2(A),}
$$
$$
\xyma{ E_{9,0}^3\ar@{->}[r]^{d_{9,0}^3} &
E_{6,2}^3\ar@{->}[r]^{d_{6,2}^3} \ar@{=}[d]& E_{3,5}^3\ar@{->}[r]^{d_{3,4}^3} \ar@{=}[d] & E_{0,6}^3 \ar@{=}[d]\\
\Lambda^3(A) \ar@{^{(}->}[u] \ar@{->}[r]^{d_3} &
\Lambda^2(A)\otimes A\ar@{->}[r]^{d_2} & A\otimes \Gamma_2(A)
\ar@{->}[r]^{d_1} & \Gamma_3(A)}
$$

\bigskip

We will  now  give a functorial description of  certain homology
groups of these  complexes $C^n(A)$. Some applications of these
results in the theory of derived functors one can find in
\cite{BM}.
\begin{prop}\label{jean}
Let $A$ be a free abelian. Then \begin{enumerate} \item
\cite{Franjou} For any prime number $p$, $H_0C^p(A)=A\otimes
\mathbb Z/p,$ and $H_iC^p(A)=0,$ for all $i>0$; \item \cite{Jean}
 There is a natural isomorphism
$$ H_0C^n(A)\simeq \bigoplus_{p|n,\ p\ \text{prime}}\Gamma_{n/p}(A\otimes \mathbb Z/p).
$$
\end{enumerate}
\end{prop}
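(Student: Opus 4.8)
The plan is to reduce the statement to a concrete cokernel, carry it out on finitely generated free groups, and then upgrade the resulting isomorphism to a natural one by exploiting the fact that both sides are \emph{exponential} functors of $A$. Since $\Lambda^0(A)=\mathbb Z$ and $d_1(b\otimes X)=-bX$ is, up to sign, the multiplication $A\otimes\Gamma_{n-1}(A)\to\Gamma_n(A)$ of the divided power algebra $\Gamma_\ast(A)$, one has
$$H_0C^n(A)=\Gamma_n(A)/A\cdot\Gamma_{n-1}(A)=\bigl(\Gamma_\ast(A)/(\Gamma_1(A))\bigr)_n,$$
the degree $n$ part of the quotient of $\Gamma_\ast(A)$ by the ideal generated by $A=\Gamma_1(A)$. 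Statement (1) for $H_0$ is the case $n=p$, using $\Gamma_1(A\otimes\mathbb Z/p)=A\otimes\mathbb Z/p$; the vanishing $H_iC^p(A)=0$ for $i>0$ I would quote from \cite{Franjou} (it also follows from exactness of the Koszul-type complex $C^p(A)\otimes\mathbb Q$ together with a prime-by-prime analysis of torsion). Because $\Gamma_\ast,\Lambda^\ast,SP^\ast$ commute with filtered colimits it suffices to treat $A=\mathbb Z^r$, where all groups in sight are finite direct sums of cyclic groups.

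\emph{Computation on $A=\mathbb Z^r$.} Using the monomial basis $m_{\underline a}=\prod_i\gamma_{a_i}(e_i)$, $\underline a\in\mathbb Z_{\ge0}^r$, $|\underline a|=n$, of the free abelian group $\Gamma_n(A)$ and the identity $\gamma_1(x)\gamma_k(x)=(k+1)\gamma_{k+1}(x)$, one finds that $A\cdot\Gamma_{n-1}(A)$ is spanned by the elements $a_j\,m_{\underline a}$ (over all $\underline a$ with $|\underline a|=n$ and all $j$ with $a_j\ge1$), hence
$$A\cdot\Gamma_{n-1}(A)=\bigoplus_{|\underline a|=n}\gcd\{a_i:a_i>0\}\cdot\mathbb Z\,m_{\underline a},\qquad H_0C^n(A)\cong\bigoplus_{|\underline a|=n}\mathbb Z/\gcd\{a_i:a_i>0\}.$$
On the other side, from $\Gamma_k(\mathbb Z/p)\cong\mathbb Z/p^{1+v_p(k)}$ (a special case of the formula recalled in the text), the splitting of divided powers of a direct sum, and $\mathbb Z/p^a\otimes\mathbb Z/p^b\cong\mathbb Z/p^{\min(a,b)}$, one obtains $\Gamma_m(A\otimes\mathbb Z/p)\cong\bigoplus_{|\underline c|=m}\mathbb Z/p^{1+v_p(\gcd\{c_i:c_i>0\})}$. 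Splitting each $\mathbb Z/\gcd\{a_i\}$ by the Chinese remainder theorem and matching $p$-primary parts through the bijection $\underline a\leftrightarrow\underline c=\underline a/p$ between $\{\underline a:|\underline a|=n,\ p\mid a_i\ \forall i\}$ and $\{\underline c:|\underline c|=n/p\}$ (for which $v_p(\gcd\underline a)=1+v_p(\gcd\underline c)$) shows that the two sides of (2) are abstractly isomorphic for every free $A$; in particular $H_0C^n(\mathbb Z)\cong\mathbb Z/n$.

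\emph{Naturality.} Both $F:=\bigoplus_nH_0C^n(-)=\Gamma_\ast(-)/(\Gamma_1(-))$ and $G:=\bigoplus_n\bigoplus_{p\mid n}\Gamma_{n/p}(-\otimes\mathbb Z/p)$ are exponential: $F(A\oplus B)\cong F(A)\otimes F(B)$ comes from the $\oplus$-splitting of $\Gamma_\ast$ and right-exactness of $\otimes$, and $G(A\oplus B)\cong G(A)\otimes G(B)$ follows from the same splitting once one notes that the cross terms $\Gamma_a(A\otimes\mathbb Z/p)\otimes\Gamma_b(B\otimes\mathbb Z/q)$ vanish for $p\ne q$, so that under $i=pa$, $j=pb$ the two Künneth sums agree. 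I would then build a natural transformation $\eta\colon G\Rightarrow F$ one prime at a time: the inclusion $\Gamma_p(A)\hookrightarrow\Gamma_\ast(A)$ extends to a morphism of divided power algebras $\Gamma_\ast(\Gamma_p(A))\to\Gamma_\ast(A)$, whose degree $n$ component $\Gamma_{n/p}(\Gamma_p(A))\to\Gamma_n(A)$, composed with the projection onto $H_0C^n(A)$, kills the kernel of the surjection $\Gamma_{n/p}(\Gamma_p(A))\twoheadrightarrow\Gamma_{n/p}(A\otimes\mathbb Z/p)$ induced by $\Gamma_p(A)\twoheadrightarrow H_0C^p(A)=A\otimes\mathbb Z/p$, giving $\eta^{(p)}\colon\Gamma_{n/p}(A\otimes\mathbb Z/p)\to H_0C^n(A)$ and $\eta=\sum_{p\mid n}\eta^{(p)}$. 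One then checks that $\eta$ respects the exponential structures and that $\eta_{\mathbb Z}$ is an isomorphism: on $\mathbb Z$ it is the homomorphism $\mathbb Z/p^{v_p(n)}\to\mathbb Z/n$ sending $1$ to $\frac{n!}{(p!)^{n/p}(n/p)!}$, a unit modulo $p$ by Legendre's formula, so it identifies $\Gamma_{n/p}(\mathbb Z/p)$ with the $p$-primary component of $\mathbb Z/n\cong H_0C^n(\mathbb Z)$, and summing over $p\mid n$ yields an isomorphism. Since a morphism of exponential functors that is an isomorphism on $\mathbb Z$ becomes $\eta_{\mathbb Z}^{\otimes r}$ on $\mathbb Z^r$, it is an isomorphism there, and then on every free abelian group by filtered colimits.

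\emph{The main obstacle} is the descent in the last step: verifying that the divided power algebra map $\Gamma_\ast(\Gamma_p(A))\to\Gamma_\ast(A)$ carries the divided power ideal generated by $\ker(\Gamma_p(A)\to A\otimes\mathbb Z/p)=A\cdot\Gamma_{p-1}(A)$ into $A\cdot\Gamma_{n-1}(A)$ — i.e. controlling $\gamma_j$ of decomposable elements — together with the compatibility of $\eta$ with the reindexing $i=pa$, $j=pb$ in the Künneth formula for $G$; this is, I expect, exactly the point at which the original argument of \cite{Jean} needs correction. Should the descent resist a direct treatment, an alternative is to use that an exponential functor on free abelian groups is determined by the graded bialgebra structure on its value at $\mathbb Z$, and to check directly from the computations above that $F(\mathbb Z)$ and $G(\mathbb Z)$ are isomorphic as graded bialgebras.
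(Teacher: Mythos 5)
Your route is genuinely different from the paper's, and the comparison is instructive. The paper constructs the map in the \emph{opposite} direction: an explicit $q_n:\Gamma_n(A)\to\bigoplus_{p\mid n}\Gamma_{n/p}(A\otimes\mathbb Z/p)$ defined on monomials $\gamma_{i_1}(a_1)\cdots\gamma_{i_t}(a_t)$ by dividing all exponents by each common prime divisor $p$; the entire difficulty there is well-definedness against the relations of $\Gamma_*$, which is settled by the Loveless congruence $\binom{pn}{pk}\equiv\binom{n}{k}\bmod p^r$ (Lemma \ref{number}) together with the fact that $\Gamma_{m}(A\otimes\mathbb Z/p)$ is killed by $p^{1+v_p(m)}$. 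You instead build $\eta^{(p)}:\Gamma_{n/p}(A\otimes\mathbb Z/p)\to H_0C^n(A)$ from the universal property of $\Gamma_*$ and the divided-power structure on the augmentation ideal $\Gamma_{\geq 1}(A)$, which trades the binomial arithmetic for a descent problem along the PD-ideal. What the two arguments share is the endgame: both reduce to $A=\mathbb Z$ by showing the comparison map respects the exponential (cross-effect) decompositions $H_0C^n(A\oplus B)\simeq\bigoplus H_0C^i(A)\otimes H_0C^j(B)$ and $\Gamma_{n/p}((A\oplus B)\otimes\mathbb Z/p)\simeq\bigoplus\Gamma_l(A\otimes\mathbb Z/p)\otimes\Gamma_k(B\otimes\mathbb Z/p)$ — your ``one then checks that $\eta$ respects the exponential structures'' is exactly the paper's diagram (\ref{elementary}) and is not optional; it is the substance of the induction on rank. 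Your explicit computation of both sides on $\mathbb Z^r$ as $\bigoplus_{|\underline a|=n}\mathbb Z/\gcd\{a_i\}$ is correct and a useful sanity check, but, as you note, it only yields an abstract isomorphism.

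The one gap you flag — the descent — is real but closes cleanly, so you should not retreat to the bialgebra fallback. By Roby \cite{roby}, for a surjection $V\twoheadrightarrow V/W$ the kernel of $\Gamma_m(V)\to\Gamma_m(V/W)$ is the degree-$m$ part of the ideal generated by the elements $\gamma_t(w)$, $t\geq 1$, $w\in W$. Here $V=\Gamma_p(A)$ and $W=A\cdot\Gamma_{p-1}(A)$, so it suffices to show that for $w=\sum_k a_kz_k$ with $a_k\in A$, $z_k\in\Gamma_{p-1}(A)$, the element $\gamma_t(w)\in\Gamma_{tp}(A)$ lies in $A\cdot\Gamma_{tp-1}(A)$ for every $t\geq 1$. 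Expanding $\gamma_t(\sum_k a_kz_k)=\sum_{\sum t_k=t}\prod_k\gamma_{t_k}(a_kz_k)$ and using the PD-axiom $\gamma_{t_k}(a_kz_k)=a_k^{t_k}\gamma_{t_k}(z_k)$ (valid because $a_k$ is an element of the ring $\Gamma_*(A)$ and $z_k$ lies in the PD-ideal $\Gamma_{\geq 1}(A)$), every summand contains a factor $a_k^{t_k}$ with $t_k\geq 1$ and hence lies in $A\cdot\Gamma_*(A)$, as required. With that supplied, and with the unit computation $v_p\bigl(n!/((p!)^{n/p}(n/p)!)\bigr)=0$ you already did for $\eta_{\mathbb Z}$, your argument is a complete and arguably cleaner alternative to the paper's: it avoids the number-theoretic input entirely, at the cost of invoking the PD-structure on $\Gamma_{\geq 1}(A)$.
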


We will make use of  the following fact from number theory (see
\cite{Loveless} corollary 2):
\begin{lemma}\label{number}
Let $n$ and $k$ be a pair positive integers and $p$ is a prime
number, then
$$
\binom{pn}{pk}\equiv \binom{n}{k}\mod p^r,
$$
where $r$ is the largest power of $p$ dividing $pnk(n-k)$.
\end{lemma}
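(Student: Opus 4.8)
The plan is to reduce to the congruence $\binom{pn}{pk}\equiv\binom nk\pmod{p^r}$ (the cases $k=0$ and $k=n$ being trivial) and then to account for the factor $p^{1+v_p(n)}$ and the extra factor $p^{v_p(k)+v_p(n-k)}$ by separate arguments. Write $a=v_p(n)$, $b=v_p(k)$, $c_0=v_p(n-k)$, so that $r=1+a+b+c_0$. The main tool is the expansion $(1+x)^{pn}=\bigl(1+x^p+p\,g(x)\bigr)^n$ with $g(x)=\tfrac1p\bigl((1+x)^p-1-x^p\bigr)\in\Z[x]$, a polynomial of degree $p-1$ with $g(0)=0$. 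Applying the multinomial theorem and extracting the coefficient of $x^{pk}$ yields
\[
\binom{pn}{pk}=\sum_{c\ge 0}\binom nc\,p^c A_c,\qquad A_c:=[x^{pk}]\bigl((1+x^p)^{n-c}g(x)^c\bigr)\in\Z .
\]
Since $g$ has no constant term and $\deg g<p$, one reads off $A_0=\binom nk$ and $A_1=0$ (the monomials of $g$ have exponents $1,\dots,p-1$, none divisible by $p$). Hence $\binom{pn}{pk}-\binom nk=\sum_{c\ge 2}\binom nc\,p^cA_c$, and the task becomes to prove $v_p\!\bigl(\binom nc\,p^cA_c\bigr)\ge r$ for every $c\ge 2$.

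I would handle the factor $p^{1+a}$ first. From $\binom nc=\tfrac nc\binom{n-1}{c-1}$ one gets $v_p(\binom nc)\ge a-v_p(c)$, hence $v_p(\binom nc)+c\ge a+\bigl(c-v_p(c)\bigr)\ge a+1$ for every $c\ge1$. It therefore remains to show that $v_p(A_c)$, together with any surplus of $v_p(\binom nc)+c$ beyond $1+a$, is at least $b+c_0$. For $p\ge5$ I expect this to be straightforward: using $p^cA_c=[x^{pk}]\bigl((1+x^p)^{n-c}\bigl((1+x)^p-1-x^p\bigr)^c\bigr)$ and the classical congruence $\binom{2p}{p}\equiv2\pmod{p^3}$ (and its relatives), the coefficients $\alpha_e^{(c)}:=[x^e]g(x)^c$ with $p\mid e$ turn out to be divisible by $p$ — for $c=2$ one has $\alpha_p^{(2)}=\tfrac1{p^2}\bigl(\binom{2p}p-2\bigr)$ — which, after pairing with the binomial factors in $A_c=\sum_{p\mid e}\alpha_e^{(c)}\binom{n-c}{k-e/p}$, forces $v_p(A_c)\ge b+c_0$.

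For $p=2$ and $p=3$ the coefficients $\alpha_e^{(c)}$ need not be $p$-divisible, and here I would argue term by term in $A_c=\sum_{p\mid e}\alpha_e^{(c)}\binom{n-c}{k-e/p}$. By Kummer's theorem, $v_p\bigl(\binom{n-c}{k-e/p}\bigr)$ equals the number of carries in the base-$p$ addition $(k-e/p)+(n-k-e/p)=n-c$; writing $k=p^bk_0$ and $n-k=p^{c_0}(n-k)_0$ and using $n=k+(n-k)$ (so at least two of $a,b,c_0$ coincide with their minimum), one checks that whenever $\alpha_e^{(c)}$ is a $p$-adic unit these carries — or, in the residual cases, the divisibility already contained in $v_p(\binom nc)+c$, such as the factor $v_p(n-1)$ present when $p\mid n$ and $c=2$ — supply the missing $b+c_0$. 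The main obstacle is exactly this last bookkeeping for $p=2,3$: there the generating-function bounds are tight, so one cannot afford slack and must push through the carry count in all cases of how $v_p(n)$, $v_p(k)$, $v_p(n-k)$ interleave. Everything else — the identity $\binom{pn}{pk}=\sum_c\binom nc p^cA_c$, the values $A_0=\binom nk$ and $A_1=0$, and the bound $v_p(\binom nc)+c\ge1+v_p(n)$ — is routine. (An alternative is the $p$-adic-logarithm route, writing $\binom{pn}{pk}/\binom nk=\prod_{b=1}^{n-k}\prod_{s=1}^{p-1}\bigl(1+\tfrac{pk}{pb-s}\bigr)$ and estimating $\log$; but there the contribution $p^{v_p(n)}$ only emerges after recognizing cancellations among the terms, which I find less transparent.)
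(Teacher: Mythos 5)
The paper does not actually prove this lemma: it is quoted as a known fact, namely Corollary~2 of the cited paper of Loveless, so there is no internal argument to compare yours against. Your setup is the standard generating-function route to Jacobsthal--Kazandzidis-type congruences, and the parts you actually carry out are correct: the expansion $(1+x)^{pn}=\bigl(1+x^p+p\,g(x)\bigr)^n$ with $g\in\mathbb{Z}[x]$ of degree $p-1$ and $g(0)=0$, the resulting identity $\binom{pn}{pk}=\sum_{c}\binom{n}{c}p^cA_c$, the evaluations $A_0=\binom{n}{k}$ and $A_1=0$, and the estimate $v_p\bigl(\binom{n}{c}\bigr)+c\ge 1+v_p(n)$ for $c\ge 1$ are all fine and account for the factor $p^{1+v_p(n)}$ of the modulus.

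The proof is nevertheless incomplete, and the gap sits exactly where the content of the lemma lies: you must show that every term $\binom{n}{c}p^cA_c$ with $c\ge 2$ also carries the extra factor $p^{v_p(k)+v_p(n-k)}$, and this is asserted rather than proved. For $p\ge 5$ you verify the Wolstenholme-type divisibility of the coefficients $\alpha_e^{(c)}$ only in the single case $c=2$, $e=p$, and even there the argument reduces to the unproved inequality $v_p\binom{n-2}{k-1}\ge v_p(k)+v_p(n-k)-2$; the carry count via Kummer's theorem that would establish this (and its analogues for $c\ge 3$) is never performed. For $p=2,3$ you explicitly defer the entire case analysis, yet these are precisely the primes the paper needs when it invokes the lemma in the verification of relation (\ref{q2}), where $p$ ranges over all primes dividing the indices. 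A further small slip: the ``factor $v_p(n-1)$ present when $p\mid n$'' is vacuous, since $p\mid n$ forces $v_p(n-1)=0$. As written this is a credible plan with the decisive estimates missing, not a proof; either the Kummer bookkeeping must be pushed through in all interleavings of $v_p(n)$, $v_p(k)$, $v_p(n-k)$, or one should simply cite Loveless as the paper does.
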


\vspace{.5cm}\noindent{\it Proof of Proposition \ref{jean} (2).}
Let  $n\geq 2$  and define the map
$$
q_n: \Gamma_n(A)\to \bigoplus_{p|n}\Gamma_{n/p}(A\otimes \mathbb
Z/p)
$$
by setting:
\begin{multline*}
q_n: \gamma_{i_1}(a_1)\dots \gamma_{i_t}(a_t)\mapsto \sum_{p|i_k,\
\text{for all}\ 1\leq k\leq t}\gamma_{i_1/p}(\bar a_1)\dots
\gamma_{i_t/p}(\bar a_t),\\ i_1+\dots+i_t=n,\ a_k\in A,\ \bar
a_k\in A\otimes \mathbb Z/p.
\end{multline*}
If $(i_1,\dots, i_t)=1$, then we set
$$
q_n(\gamma_{i_1}(a_1)\dots \gamma_{i_t}(a_t))=0,\ \text{where}\
a_k\in A\ \text{for\ all}\ k.
$$
Let us check that the map $q_n$ is well-defined. For that we have
to show that
\begin{align}
& q_n(\gamma_{j_1}(x)\gamma_{j_2}(x)\dots
\gamma_{j_t}(x_t))=q_n(\binom{j_1+j_2}{j_1}\gamma_{j_1+j_2}(x)\dots
\gamma_{j_t}(x_t)) \label{q2}\\
& q_n(\gamma_{j_1}(x_1+y_1)\dots
\gamma_{j_t}(x_t))=\sum_{k+l={j_1}}q_n(\gamma_{k}(x_1)\gamma_l(y_1)\dots \gamma_{j_t}(x_t)) \label{q3}\\
& q_n(\gamma_{j_1}(-x_1)\dots
\gamma_{j_t}(x_t))=(-1)^{j_1}q_n(\gamma_{j_1}(x_1)\dots
\gamma_{j_t}(x_t))\label{q4}
\end{align}

Verification of (\ref{q2}). First suppose that $p|j_1+j_2,\ p\nmid
j_1$. Since for every pair of numbers $n\geq k$, one has
\begin{equation}\frac{n}{(n,k)}|\binom{n}{k},\end{equation}
we have $$ \binom{j_1+j_2}{j_1}=p^s m_1,\ j_1+j_2=p^s m_2,\
(m_1,p)=(m_2,p)=1,\ s\geq 1.
$$
Hence $\binom{j_1+j_2}{j_1}\frac{m_2}{m_1}=p(\frac{j_1+j_2}{p}).$
Observe that $\Gamma_{\frac{j_1+j_2}{p}}(A\otimes \mathbb Z/p)$ is
a
 $p$-group, hence
\begin{equation}\label{qa1}
\binom{j_1+j_2}{j_1}\gamma_{\frac{j_1+j_2}{p}}(\bar x)=0,\ \bar
x\in A\otimes \mathbb Z/p
\end{equation}
since
$$
\binom{j_1+j_2}{j_1}\frac{m_2}{m_1}\gamma_{\frac{j_1+j_2}{p}}(\bar
x)=p\bar x\gamma_{\frac{j_1+j_2}{p}-1}(\bar x)=0.
$$
The equality (\ref{qa1}) implies that
\begin{multline*}
q_n(\binom{j_1+j_2}{j_1}\gamma_{j_1+j_2}(x)\dots
\gamma_{j_t}(x_t))-q_n(\gamma_{j_1}(x)\gamma_{j_2}(x)\dots
\gamma_{j_t}(x_t))=\\ \binom{j_1+j_2}{j_1}\sum_{p|j_1+j_2,\
p|j_k,\ k>2}\gamma_{\frac{j_1+j_2}{p}}(\bar x)\dots
\gamma_{j_t/p}(\bar x_t)\, -\sum_{p|j_k,\ \text{for all}\ 1\leq
k\leq t}\gamma_{j_1/p}(\bar x)\gamma_{j_2/p}(\bar x)\dots
\gamma_{j_t/p}(\bar x_t)=\\ \left(
\binom{j_1+j_2}{j_1}-\binom{\frac{j_1+j_2}{p}}{\frac{j_1}{p}}\right)\sum_{p|j_k,\
\text{for all}\ 1\leq k\leq t}\gamma_{\frac{j_1+j_2}{p}}(\bar
x)\dots \gamma_{j_t/p}(\bar x_t)
\end{multline*}
Let $j_1+j_2=p^sm,\ (m,p)=1$. Lemma \ref{number} implies that
$$
\binom{j_1+j_2}{j_1}\equiv\binom{\frac{j_1+j_2}{p}}{\frac{j_1}{p}}\mod
p^r
$$
where $r$ is the largest power of $p$, dividing
$(j_1+j_2)j_1j_2/p^2.$ Since $p|j_1,\ p|j_2,$ we have
$$
\binom{j_1+j_2}{j_1}\equiv\binom{\frac{j_1+j_2}{p}}{\frac{j_1}{p}}\mod
p^s.
$$
Hence
$$
\left(\binom{j_1+j_2}{j_1}-\binom{\frac{j_1+j_2}{p}}{\frac{j_1}{p}}\right)\gamma_{\frac{j_1+j_2}{p}}(\bar
x)=0,\ \bar x\in A\otimes \mathbb Z/p
$$
and the property (\ref{q2}) follows.

\medskip

Verification of (\ref{q3}). We have

\begin{multline*}q_n(\gamma_{j_1}(x_1+y_1)\dots
\gamma_{j_t}(x_t))-\sum_{k+l={j_1}}q_n(\gamma_{k}(x_1)\gamma_l(y_1)\dots
\gamma_{j_t}(x_t))=\\ \sum_{p|j_t,\ t\geq 1}\gamma_{j_1/p} (\bar
x_1+\bar y_1)\dots \gamma_{j_t/p}(\bar
x_t)-\sum_{k+l=j_1}\sum_{p|k,p|l,p|j_t,\ t\geq 2}\gamma_{k/p}(\bar
x_1)\gamma_{l/p}(\bar y_1)\dots \gamma_{j_t/p}(\bar x_t)=\\
\sum_{p|j_t,\ t\geq 1}\gamma_{j_1/p} (\bar x_1+\bar y_1)\dots
\gamma_{j_t/p}(\bar x_t)-\sum_{p|k,p|l,p|j_t,\ t\geq
2}\sum_{\frac{k}{p}+\frac{l}{p}=\frac{j_1}{p}}\gamma_{k/p}(\bar
x_1)\gamma_{l/p}(\bar y_1)\dots \gamma_{j_t/p}(\bar x_t)=0
\end{multline*}

Verification of (\ref{q4}). We have
\begin{multline*}
q_n(\gamma_{j_1}(-x_1)\dots
\gamma_{j_t}(x_t))-(-1)^{j_1}q_n(\gamma_{j_1}(x_1)\dots
\gamma_{j_t}(x_t))=\\
\sum_{p|j_k,\ k\geq 1}(\gamma_{j_1/p}(-\bar
x_1)-(-1)^{j_1}\gamma_{j_1/p}(\bar x_1))\dots
\gamma_{j_t/p}(x_t)=0
\end{multline*}
since
$$
(\gamma_{j_1/p}(-\bar x_1)-(-1)^{j_1}\gamma_{j_1/p}(\bar x_1))=0
$$
(we separately check the cases $p=2$ and $p\neq 2$).

We  now know that the map $q_n$ is well-defined. It induces a  map
$$
\bar q_n: H^0C^n(A)\to \bigoplus_{p|n}\Gamma_{n/p}(A\otimes
\mathbb Z/p),
$$
since $q_n(a)=0$ for every $a\in \text{im}\{A\otimes
\Gamma_{n-1}(A)\to \Gamma_n(A)\}.$

Let $A=\mathbb Z$, then
$$
H_0(\mathbb Z)=\text{coker}\{\Gamma_{n-1}(\mathbb Z)\otimes
\mathbb Z\to \Gamma_n(\mathbb Z)\}\simeq \text{coker}\{\mathbb
Z\buildrel{n}\over\to \mathbb Z\}\simeq \mathbb Z/n.
$$
Let $n=\prod p_i^{s_i}$ be the prime decomposition of $n$. Then
$$
\bigoplus_{p_i|n}\Gamma_{n/p_i}(\mathbb
Z/p_i)=\bigoplus_{p_i|n}\mathbb Z/p_i^{s_i}=\mathbb Z/n.
$$
It follows from definition of the map $\bar q_n$, that
$$
\bar q_n: H_0C^n(\mathbb
Z)\to\bigoplus_{p_i|n}\Gamma_{n/p_i}(\mathbb Z/p_i)
$$
is an isomorphism.

For free abelian groups $A$ and $B$, one has a natural isomorphism
of complexes
$$
C^n(A\oplus B)\simeq \bigoplus_{i+j=n,\ i,j\geq 0} C^i(A)\otimes
C^j(B)
$$
This implies that the cross-effect\footnote{Given a functor
$F:{\sf Ab\to Ab}$, its cross effect is defined as the kernel of
the natural map $F(A|B)=ker\{F(A\oplus B)\to F(A)\oplus F(B)\}$,\
$A,B\in {\sf Ab}$} $C^n(A|B)$ of the functor $C^n(A)$ is described
by
$$
C^n(A|B)\simeq\bigoplus_{i+j=n,\ i,j> 0} C^i(A)\otimes C^j(B)
$$
and its homology $ (H_kC^n)(A|B)=H_kC^n(A|B) $ can be described
with the help of K\"{u}nneth formulas:
\begin{multline*}
0\to \bigoplus_{i+j=n,\ i,j>0,\ r+s=k}H_rC^i(A)\otimes
H_sC^j(B)\to H_kC^n(A|B)\to\\ \bigoplus_{i+j=n,\ i,j>0,\
r+s=k-1}\Tor(H_rC^i(A), H_sC^j(B))\to 0
\end{multline*}
Hence we have the following simple description of the cross-effect
of $H_0C^n(A)$:
\begin{equation}\label{dec1}
H_0C^n(A|B)\simeq \bigoplus_{i+j=n,\ i,j>0}H_0C^i(A)\otimes
H_0C^j(B)
\end{equation}
From the other hand, we have the following decomposition of the
cross-effect of the functor
$\tilde\Gamma_{n/p}^p(A):=\Gamma_{n/p}(A\otimes \mathbb Z/p)$:
\begin{equation*}\label{dec2a}
\tilde\Gamma_{n/p}^p(A|B)=\bigoplus_{l+k=n/p}\Gamma_{l}(A\otimes
\mathbb Z/p)\otimes \Gamma_k(B\otimes \mathbb Z/p)
\end{equation*}
Hence
\begin{equation}\label{dec2}
\bigoplus_{p|n}\tilde\Gamma_{n/p}^p(A|B)=\bigoplus_{p|n}\bigoplus_{l+k=n/p}\Gamma_{l}(A\otimes
\mathbb Z/p)\otimes \Gamma_k(B\otimes \mathbb Z/p)
\end{equation}
We must now show that the maps $\bar{q}_n$ preserve the
decompositions (\ref{dec1}) and (\ref{dec2}). This is equivalent
to the commutativity of the following diagram:

\begin{equation}\label{elementary}
\xymatrix{H_0C^{i}(A)\otimes H_0C^{j}(B) \ar@{->}[rr]^(.4){\bar
q_{i}\otimes
  \bar q_{j}}
\ar[d]^{\wr} & &\bigoplus_{p|i}\Gamma_{i/p}(A\otimes \mathbb
Z/p)\otimes \bigoplus_{p|j}\Gamma_{j/p}(B\otimes \mathbb Z/p)
\ar@{^{(}->}[dd]^{\varepsilon'}
\\  H_0(C^{i}(A)\otimes C^{j}(B))  \ar@{^{(}->}[d]
&&
\\  H_0C^{i+j}(A\oplus B) \ar[rr]_(.45){\bar q_{i+j}} & &
   \bigoplus_{p|(i+j)}\Gamma_{\frac{i+j}{p}}((A\oplus B)\otimes
   \mathbb Z/p)
}
\end{equation}

The map $\varepsilon'\circ(\bar q_{i}\otimes \bar q_{j})$ is
defined via the natural map
\begin{multline*}
\prod \gamma_{i_k}(x_k)\otimes \prod \gamma_{j_k}(y_k)\mapsto
\sum_{p|i_k,\ p|j_k}\prod \gamma_{i_k/p}(\bar x_i)\prod
\gamma_{j_k/p}(\bar y_i)\in
\bigoplus_{p|(i+j)}\Gamma_{\frac{i+j}{p}}((A\oplus B)\otimes
\mathbb Z/p),\\
x_k\in A,\ y_k\in B,\ \bar x_k\in A\otimes \mathbb Z/p,\ \bar
y_k\in B\otimes \mathbb Z/p,\ \sum j_k=j,\ \sum i_k=i
\end{multline*} and the commutativity of the diagram
(\ref{elementary}) follows. This proves that the  natural map
\begin{multline*}H_0C^n(A|B)\simeq
\bigoplus_{i+j=n,\ i,j>0}H_0C^i(A)\otimes H_0C^j(B)\to\\
\bigoplus_{p|n}\tilde\Gamma_{n/p}^p(A|B)=\bigoplus_{p|n}\bigoplus_{l+k=n/p}\Gamma_{l}(A\otimes
\mathbb Z/p)\otimes \Gamma_k(B\otimes \mathbb Z/p)
\end{multline*}
induced by $\bar{q}_n$ on cross-effects is an isomorphism, and it
follows from this that $\bar{q}_n$ is an isomorphism for all free
abelian groups $A$.
 $\Box$

\subsection{Derived functors and homology}

Let $A$ be an abelian group, and $F$  an endofunctor on the
category of abelian groups. Recall that for every  $n\geq 0$ the
derived functor of $F$
 in the sense
of Dold-Puppe  \cite{DoldPuppe} are defined by
$$
L_iF(A,n)=\pi_i(FKP_\ast[n]),\ i\geq 0
$$
where $P_\ast \to A$ is a projective resolution of $A$, and
 $K$ is  the Dold-Kan transform,  inverse to the Moore normalization  functor
\[
N:  \mathrm{Simpl}({\sf Ab}) \to C({\sf Ab})
\]
from simplicial abelian groups to chain complexes.

Recall the description of the highest derived functors of the
tensor power functor due to Mac Lance \cite{Mac1}. The group
$\Tor^{[n]}(A)$ is generated by the $n$-linear expressions
$\tau_h(a_1, \ldots a_n)$ (where all $a_i$ live in the subgroup $
{}_hA$ of elements $a$ of $A$ for which $ha=0  \ (h
>0)$, subject to the so-called slide relations
\begin{equation}
\label{slide} \tau_{hk}(a_1,\ldots, a_i,\ldots a_n) =
\tau_{h}(ka_1, \ldots, ka_{i-1}, a_i, k_{i+1}, \ldots, ka_n)
\end{equation} for all $i$ whenever  $hka_j = 0$ for all $j \neq
i$ and $ha_i=0$.
 The associativity of the derived
tensor product functor implies that there are   canonical
isomorphisms
$$
\Tor^{[n]}(A)\simeq \Tor(\Tor^{[n-1]}(A),A),\ n\geq 2.
$$
For $n\geq 2,$ there is a natural isomorphism:
$$
L_{n-1}\otimes^n(A)\simeq \Tor^{[n]}(A).
$$

The map $\ot^n \la SP^n$  induces  a natural epimorphism \bee
\label{epi} \Tor^{[n]}(A)\to L_{n-1}SP^n(A) \ee which sends the
generators $\tau_h(a_1,\dots,a_n)$ of $ \Tor^{[n]}(A)$ to
generators $\beta_h(a_1,\dots,a_n)$ of
\[ \mathcal{S}_n(A) :=L_{n-1}SP^n(A)\,.\]
The kernel of this map is generated by the elements
$\tau_h(a_1,\dots,a_n)$ with $a_i=a_j$ for some $i\neq j$. It is
shown by Jean  in  \cite{Jean} that
\begin{equation}\label{syder}
L_iSP^{n}(A)\simeq (L_iSP^{i+1}(A)\otimes
SP^{n-(i+1)}(A))/Jac_{SP},
\end{equation}
where $Jac_{SP}$ is the subgroup generated by elements of the form
$$
\sum_{k=1}^{i+2}(-1)^k\beta_h(x_1,\dots, \hat x_k,\dots,
x_{i+2})\otimes x_ky_1\dots y_{n-i-2}.
$$ with $x_i \in {}_hA$ and $y_j \in A$ for all $i,j$.

We will now construct a series of maps:
$$
f_i^{n,p}: L_iSP^{n/p}(A\otimes \mathbb Z/p)\to H_iC^n(A)
$$
for a free abelian $A$ and $p|n$. We first  choose liftings
$(x_i)$ to  $A$
 of a given  family of elements
$(\bar{x}_i) \in A \otimes \mathbb{Z}_p$.  We set
\begin{multline*}
f_i^{n,p}:\beta_p(\bar x_1,\dots, \bar x_{i+1})\otimes \bar
x_{i+2}\dots
\bar x_{\frac{n}{p}}\mapsto \eta_i(x_1,\dots,x_{\frac{n}{p}}):=\\
\sum_{t=1}^{i+1}(-1)^tx_{1}\wedge \dots \wedge \hat x_t\wedge
\dots \wedge x_{i+1}\otimes \gamma_{p-1}(x_1)\dots
\widehat{\gamma_{p-1}(x_{t})}\dots\gamma_{p-1}(x_{i+1})\gamma_p(x_t)\gamma_p(x_{i+2})\dots
\gamma_p(x_{\frac{n}{p}})\\
\in \Lambda^i(A)\otimes \Gamma_{n-i}(A),\ \bar x_k\in A\otimes
\mathbb Z/p,\ x_k\in A.
\end{multline*}
\begin{prop}
The maps $f_i^{n,p}$ are well defined for all $i,n,p$.
\end{prop}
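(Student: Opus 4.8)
The plan is to unwind the statement ``$f_i^{n,p}$ is well defined'' into four verifications about the element $\eta_i=\eta_i(x_1,\dots,x_{n/p})\in\Lambda^i(A)\otimes\Gamma_{n-i}(A)=C_i^n(A)$:
\begin{enumerate}
\item $\eta_i$ is a cycle of $C^n(A)$, so that $[\eta_i]\in H_iC^n(A)$ is defined;
\item $[\eta_i]$ is unchanged when a lifting $x_k$ of $\bar x_k$ is replaced by $x_k+pz_k$ (any $z_k\in A$);
\item the assignment $\beta_p(\bar x_1,\dots,\bar x_{i+1})\otimes\bar x_{i+2}\cdots\bar x_{n/p}\mapsto[\eta_i]$ respects the presentation of $L_iSP^{n/p}$ given by (\ref{syder}) and of $\mathcal S_{i+1}=L_iSP^{i+1}$ recalled above: $\mathbb Z/p$-multilinearity and alternation in the slots $\bar x_1,\dots,\bar x_{i+1}$ (vanishing on a repeated pair), the slide relations (\ref{slide}), and symmetry in the slots $\bar x_{i+2},\dots,\bar x_{n/p}$;
\item the assignment kills $Jac_{SP}$, i.e. $\sum_{k=1}^{i+2}(-1)^k\eta_i(x_1,\dots,\hat x_k,\dots,x_{i+2};x_k,y_1,\dots,y_{n/p-i-2})$ is a boundary of $C^n(A)$.
\end{enumerate}

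The organizing fact, which I would establish first, is the identity
$$d_{i+1}\big(x_1\wedge\dots\wedge x_{i+1}\otimes\gamma_{p-1}(x_1)\cdots\gamma_{p-1}(x_{i+1})\gamma_p(x_{i+2})\cdots\gamma_p(x_{n/p})\big)=p\,\eta_i,$$
immediate from $r\gamma_r(x)=x\gamma_{r-1}(x)$ applied with $r=p$ in each deleted wedge slot. Since $A$ is free, each $C_j^n(A)=\Lambda^j(A)\otimes\Gamma_{n-j}(A)$ is a free abelian group, so $p\,d_i\eta_i=d_id_{i+1}(\cdots)=0$ forces $d_i\eta_i=0$; this settles (1). (Alternatively one checks (1) head-on: in $d_i\eta_i$ the contribution obtained from the $t$-th summand by deleting the wedge factor $x_s$ with $s\le i+1$, $s\ne t$, carries the monomial $p\gamma_p(x_s)\gamma_p(x_t)\prod_{s'\ne s,t}\gamma_{p-1}(x_{s'})\prod_{u\ge i+2}\gamma_p(x_u)$, symmetric in $s\leftrightarrow t$, while the signs attached to the $(s,t)$- and $(t,s)$-contributions are opposite, so the sum cancels in pairs.) The same identity also yields $p[\eta_i]=0$ in $H_iC^n(A)$, consistently with the target being a $p$-group.

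For (2)--(4) I would use the same circle of techniques as in the proof of Proposition \ref{jean}(2), now inside the complexes $C^n(A)$ rather than on $H_0$: expand every $\gamma_p$ and $\gamma_{p-1}$ of a sum by relation 4), substitute $\gamma_r(nx)=n^r\gamma_r(x)$, and dispose of the error terms, which are of two kinds. First, products $\gamma_s(x)\gamma_{p-s}(z)$ with $1\le s\le p-1$: from $s\cdot\gamma_s(x)\gamma_{p-s}(z)=x\gamma_{s-1}(x)\gamma_{p-s}(z)$ and $(p-s)\cdot\gamma_s(x)\gamma_{p-s}(z)=z\gamma_s(x)\gamma_{p-s-1}(z)$, together with $\gcd(s,p-s)=1$ ($p$ prime), such a product (tensored with whatever exterior factor is present) is a boundary of $C^n(A)$ modulo lower-order exterior corrections produced by $d_{i+1}$, which one carries along; this plays the role that the product terms killed modulo $\mathrm{im}$ in $\Gamma_n$ played in the $H_0$ argument. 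Second, binomial coefficients that must be absorbed: their $p$-divisibility is read off from Lemma \ref{number} and from $\frac{a}{(a,b)}\mid\binom ab$, exactly as in the verification of (\ref{q2}). With these inputs, (2) reduces to observing that $\gamma_p(x_k+pz_k)-\gamma_p(x_k)=\sum_{t\ge1}p^{\,t}\gamma_{p-t}(x_k)\gamma_t(z_k)$, together with the analogous expansions of the remaining $x_k$-dependent factors of $\eta_i$, contributes only boundaries (one also uses $p^{\,p}\gamma_p(z)=p^{\,p-1}z\gamma_{p-1}(z)$); $\mathbb Z/p$-multilinearity and vanishing on a repeated pair become, after expansion, alternation of the exterior factor together with the boundary relations above; the slide relations pass to $p$-divisibilities of binomials as in (\ref{q2}); and in (4) the Jacobi sum is rewritten, using $x_t\gamma_{p-1}(x_t)=p\gamma_p(x_t)$, as $d_{i+1}$ of an explicit chain assembled from the full exterior monomial on $x_1,\dots,x_{i+2}$, again up to boundary corrections of the same type.

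The hard part will be (4), the $Jac_{SP}$ relation, together with the sign bookkeeping pervading (2) and (4): in both, the required identities hold only modulo boundaries of $C^n(A)$, so explicit primitives must be exhibited and the residual exterior-correction terms shown to telescope. As elsewhere in the paper, the prime $p=2$ will need separate attention at a few points (alternation versus vanishing on a repeated pair of exterior arguments, and the $\gamma_r(-x)$-type sign identities), but there it is routine.
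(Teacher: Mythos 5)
Your organizing identity $d_{i+1}\bigl(x_1\wedge\dots\wedge x_{i+1}\otimes\gamma_{p-1}(x_1)\cdots\gamma_{p-1}(x_{i+1})\gamma_p(x_{i+2})\cdots\gamma_p(x_{n/p})\bigr)=p\,\eta_i$ is exactly the identity the paper uses to show $\eta_i(x_1,\dots,px_k,\dots,x_{n/p})\in\mathrm{im}(d_{i+1})$, and your torsion-freeness argument for $d_i\eta_i=0$ is a clean (arguably cleaner) variant of the paper's pairwise-cancellation computation. Up to that point you match the paper.

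The gap is in your steps (3)--(4), which you defer to an unexecuted program. The paper's treatment of the $Jac_{SP}$ relation is a one-line observation: writing out $\sum_{k}(-1)^k\eta_i(x_1,\dots,\hat x_k,\dots,x_{i+2};x_k,y_1,\dots)$ explicitly, each monomial indexed by an unordered pair $\{m,k\}\subset\{1,\dots,i+2\}$ occurs exactly twice --- once from the summand deleting $x_k$, once from the summand deleting $x_m$ --- with opposite signs, so the sum vanishes \emph{identically} in $\Lambda^i(A)\otimes\Gamma_{n-i}(A)$; no primitive needs to be exhibited. You instead label this ``the hard part'' and propose to realize it as $d_{i+1}$ of an explicit chain ``up to boundary corrections,'' which is both harder than necessary and not carried out. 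More seriously, the mechanism you propose for the remaining relations is unjustified for $i\geq 1$: from $\gcd(s,p-s)=1$ you can write $\gamma_s(x)\gamma_{p-s}(z)Y$ as an integral combination of elements of the form $w\cdot Y'$ with $w\in A$, which places it in the image of the multiplication $A\otimes\Gamma_{n-i-1}(A)\to\Gamma_{n-i}(A)$ --- sufficient for the $H_0$ argument of Proposition \ref{jean}(2) --- but $\mathrm{im}(d_{i+1})\subset\Lambda^i(A)\otimes\Gamma_{n-i}(A)$ consists only of \emph{alternating sums over the wedge slots}, so such a term is not thereby a boundary of $C^n(A)$. Until the ``residual exterior corrections'' are actually shown to telescope, your verifications (2) (beyond the $px_k$ case), (3) and (4) remain open. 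To be fair, the paper itself proves only the three facts above ($p\eta_i$ a boundary, $d_i\eta_i=0$, exact cancellation of the $Jac_{SP}$ image) and leaves the multilinearity and slide verifications implicit; but the cancellation argument for $Jac_{SP}$ is the one substantive idea of the paper's proof that your proposal is missing.
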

\begin{proof}
We have
\begin{multline*}
\eta_i(px_1,\dots,x_{\frac{n}{p}})=\\
\sum_{t=1}^{i+1}(-1)^tpx_{1}\wedge \dots \wedge \hat x_t\wedge
\dots \wedge x_{i+1}\otimes \gamma_{p-1}(x_1)\dots
\widehat{\gamma_{p-1}(x_{t})}\dots\gamma_{p-1}(x_{i+1})\gamma_p(x_t)\gamma_p(x_{i+2})\dots
\gamma_p(x_{\frac{n}{p}})\\
\sum_{t=1}^{i+1}(-1)^tx_{1}\wedge \dots \wedge \hat x_t\wedge
\dots \wedge x_{i+1}\otimes x_t\gamma_{p-1}(x_1)\dots
\gamma_{p-1}(x_{t})\dots\gamma_{p-1}(x_{i+1})\gamma_p(x_{i+2})\dots
\gamma_p(x_{\frac{n}{p}})=\\
d_{i+1}(x_1\wedge \dots \wedge x_{i+1}\otimes
\gamma_{p-1}(x_1)\dots
\gamma_{p-1}(x_{t})\dots\gamma_{p-1}(x_{i+1})\gamma_p(x_{i+2})\dots
\gamma_p(x_{\frac{n}{p}}))\\
\in \text{im}\{\Lambda^{i+1}(A)\otimes
\Gamma_{n-i-1}(A)\buildrel{d_{i+1}}\over\to \Lambda^i(A)\otimes
\Gamma_{n-i}(A)\}
\end{multline*}
One verifies that  for every $1\leq k\leq n/p$, one has
$$
\eta_i(x_1,\dots, px_k,\dots, x_{\frac{n}{p}})\in
im\{\Lambda^{i+1}(A)\otimes
\Gamma_{n-i-1}(A)\buildrel{d_{i+1}}\over\to \Lambda^i(A)\otimes
\Gamma_{n-i}(A)\}
$$
It follows that the map $f_i^{n,p}:  L_iSP^{n/p}(A\otimes \mathbb
Z/p)\to (\Lambda^i(A)\otimes \Gamma_{n-i}(A))/\text{im}(d_{i+1})$
is well-defined.The simplest examples of such elements are the
following
\begin{align*}
& \eta_1(x_1,x_2)=x_1\otimes x_1\gamma_2(x_2)-x_2\otimes x_2\gamma_2(x_1)\in A\otimes \Gamma_2(A),\\
& \eta_2(x_1,x_2,x_3)=-x_1\wedge x_2\otimes
x_1x_2\gamma_2(x_3)+x_1\wedge x_3\otimes x_1x_3\gamma_2(x_2)\\
& \ \ \ \ \ \ \ \ \ \ \ \ \ \ -x_2\wedge x_3\otimes
x_2x_3\gamma_2(x_1)\in \Lambda^2(A)\otimes \Gamma_4(A)
\end{align*}
By construction, the elements $\eta_i$ lie  in
$\Lambda^i(A)\otimes \Gamma_{n-i}(A).$ In fact, let us verify that
$$
\eta_i(x_1,\dots,x_{\frac{n}{p}})\in \ker\{\Lambda^i(A)\otimes
\Gamma_{n-i}(A)\buildrel{d_i}\over\to \Lambda^{i-1}(A)\otimes
\Gamma_{n-i+1}(A)\}
$$
Observe that
\begin{multline*}
d_i\eta_i(x_1,\dots,x_{\frac{n}{p}})=\\
d_i\sum_{t=1}^{i+1}(-1)^tx_{1}\wedge \dots \wedge \hat x_t\wedge
\dots \wedge x_{i+1}\otimes \gamma_{p-1}(x_1)\dots
\widehat{\gamma_{p-1}(x_{t})}\dots\gamma_{p-1}(x_{i+1})\gamma_p(x_t)\gamma_p(x_{i+2})\dots
\gamma_p(x_{\frac{n}{p}})
\end{multline*}
In this sum, for every pair of indexes $1\leq r<s\leq i+1$ there
occurs a pair of  terms
$$
(-1)^{s+r}x_{1}\wedge \dots  \hat x_r\dots \hat x_s  \dots \wedge
x_{i+1}\otimes x_r\gamma_{p-1}(x_1)\dots
\widehat{\gamma_{p-1}(x_{s})}\dots\gamma_{p-1}(x_{i+1})\gamma_p(x_s)\gamma_p(x_{i+2})\dots
\gamma_p(x_{\frac{n}{p}})
$$
and
$$
(-1)^{s+r+1}x_{1}\wedge \dots  \hat x_r\dots \hat x_s  \dots
\wedge x_{i+1}\otimes x_s\gamma_{p-1}(x_1)\dots
\widehat{\gamma_{p-1}(x_{r})}\dots\gamma_{p-1}(x_{i+1})\gamma_p(x_r)\gamma_p(x_{i+2})\dots
\gamma_p(x_{\frac{n}{p}})
$$
which cancel each other. It follows that  the entire sum is equal
to zero.

For the same reason, the map
\begin{multline*}
\sum_{k=1}^{i+2}(-1)^k\beta_p(x_1,\dots, \hat x_k,\dots,
x_{i+2})\otimes x_ky_1\dots y_l\mapsto\\
\sum_{k=1}^{i+2}(-1)^k\sum_{m=1,\ m<k}^{i+2}(-1)^m x_1\wedge \dots
\hat x_m \dots \hat x_k\dots\wedge x_{i+2}\otimes\\ (\prod_{l=1,\
l\neq m,k}^{i+2}\gamma_{p-1}(x_l))
\gamma_{p}(x_{m})\gamma_p(x_k)\gamma_p(y_1)\dots \gamma_p(y_l)+\\
\sum_{k=1}^{i+2}(-1)^k\sum_{m=1,\ m>k}^{i+2}(-1)^{m+1} x_1\wedge
\dots \hat x_k \dots \hat x_m\dots\wedge x_{i+2}\otimes\\
(\prod_{l=1,\ l\neq m,k}^{i+2}\gamma_{p-1}(x_l))
\gamma_{p}(x_{m})\gamma_p(x_k)\gamma_p(y_1)\dots \gamma_p(y_l)
\end{multline*}
is trivial.
\end{proof}

Given abelian group $A$ and $i>0$, consider a natural map
$$
f_i^n=\sum_{p|n}f_i^{n,p}: \bigoplus_{p|n,\ p\
\text{prime}}L_iSP^{\frac{n}{p}}(A\otimes \mathbb Z/p)\to
H_iC^n(A)
$$

\begin{theorem}\label{theoremdrham}
The map $f_i^n$ is an isomorphism for $i>0,\ n\leq 7$.
\end{theorem}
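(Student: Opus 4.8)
The plan is to combine a vanishing statement for most pairs $(n,i)$ with a cross-effect comparison, in the spirit of the proof of Proposition~\ref{jean}(2), for the remaining few. First one locates where the source can be non-zero: for $A$ free, $A\otimes\mathbb Z/p$ has the length-one free resolution $A\xrightarrow{p}A$, so $L_*SP^m(A\otimes\mathbb Z/p)$ is the homology of the weight-$m$ Koszul complex $[\Lambda^mA\to A\otimes SP^{m-1}A\to\cdots\to SP^mA]$ with differential $p$ times the (exact) Koszul differential; in particular $L_jSP^m(A\otimes\mathbb Z/p)=0$ for $j\ge m$. Hence $f_i^n$ has trivial source unless $p(i+1)\le n$ for some prime $p\mid n$, and for $i\ge1$, $n\le7$ this leaves exactly the pairs $(n,i)\in\{(4,1),(6,1),(6,2)\}$ (with $p=2$, together with $p=3$ contributing to $(6,1)$).

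For every other pair $(n,i)$ with $i>0$, $n\le7$ one must show $H_iC^n(A)=0$ for $A$ free. Here I would iterate the splitting $C^n(A\oplus B)\simeq\bigoplus_{a+b=n}C^a(A)\otimes C^b(B)$ and apply the Künneth formula. Since $\Lambda^{\ge2}(\mathbb Z)=0$, one has $C^m(\mathbb Z)=[\mathbb Z\xrightarrow{m}\mathbb Z]$, representing $\mathbb Z/m$ in degree $0$ (and $C^1(\mathbb Z)$ is acyclic), so every $k$-fold cross-effect $H_*C^n(\mathbb Z|\cdots|\mathbb Z)$ is computed by the complexes $\mathbb Z/a_1\otimes^L\cdots\otimes^L\mathbb Z/a_k$ with all $a_j\ge2$ and $\sum a_j=n$, and these vanish in homological degrees $\ge k$ and whenever two of the $a_j$ are coprime. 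A short check over the partitions of $n\le7$ into parts $\ge2$ then gives the required vanishing on every $\mathbb Z^r$ (this also reproves Proposition~\ref{jean}(1)), and since $H_iC^n$ is a polynomial functor, vanishing on the $\mathbb Z^r$ propagates to all free $A$.

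For the three remaining pairs, $f_i^n$ is an isomorphism by the cross-effect method. Both sides are polynomial functors of $A$, so it suffices to check $f_i^n$ on each iterated cross-effect at copies of $\mathbb Z$. On $\mathbb Z$ both sides vanish (here $i>0$, $n\le7$). On a $k$-fold cross-effect with $k\ge2$, the target splits through $C^n(A\oplus B)\simeq\bigoplus C^a(A)\otimes C^b(B)$ and Künneth into $\otimes$- and $\Tor$-pieces of the $H_rC^a$ with $a<n$, and the source splits through $SP^{n/p}((A\oplus B)\otimes\mathbb Z/p)=\bigoplus_{c+d=n/p}SP^c(A\otimes\mathbb Z/p)\otimes SP^d(B\otimes\mathbb Z/p)$ and the derived Künneth formula into the matching pieces of the $L_rSP^{a/q}(A\otimes\mathbb Z/q)$. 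Since each $L_rSP^m(A\otimes\mathbb Z/q)$ is a $q$-torsion group, mixed-prime cross terms are annihilated by $\otimes$ and $\Tor$, exactly as in the diagram \eqref{elementary}, and the comparison reduces term by term to Proposition~\ref{jean} (for the $H_0C^a$ factors), the case $f_1^4$ treated below (for the $H_1C^4$ factors), and the trivial base case. The compatibility of $f_i^n$ with these splittings is a direct verification from the defining formula for $f_i^{n,p}$: one decomposes $\eta_i(x_1,\dots,x_{n/p})$ according to which $x_k$ lie in $A$ and which in $B$, reading off the expected $\eta_r\otimes\eta_s$-terms together with boundary terms that produce the $\Tor$-summands, just as in \eqref{elementary}.

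The main obstacle should be $n=6$, $i\in\{1,2\}$: there $H_1C^6$ and $H_2C^6$ are polynomial of degree $3$, so the cross-effect bookkeeping genuinely mixes three variables and the primes $2$ and $3$, and one needs explicit descriptions of $L_1SP^2(-\otimes\mathbb Z/3)$, $L_1SP^3(-\otimes\mathbb Z/2)$ and $L_2SP^3(-\otimes\mathbb Z/2)=\mathcal S_3(-\otimes\mathbb Z/2)$, obtained from \eqref{syder} and the description of $\mathcal S_n$ as a quotient of $\Tor^{[n]}$, which must be matched against the classes $\eta_1,\eta_2$. A further subtlety, already visible at $n=6$: $f_0^4$ is \emph{not} an isomorphism --- it is the natural non-surjective map $SP^2(A\otimes\mathbb Z/2)=L_0SP^2(A\otimes\mathbb Z/2)\to\Gamma_2(A\otimes\mathbb Z/2)\subseteq H_0C^4(A)$ --- so when $H_0C^4$-factors appear inside the cross-effects of $H_iC^6$ one has to invoke Proposition~\ref{jean}(2) directly and check that this map becomes an isomorphism after applying $\Tor(-,\mathbb Z/2)$, which is precisely how those factors enter. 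The pair $n=4$, $i=1$ is by contrast immediate: $H_1C^4$ is quadratic with $H_1C^4(\mathbb Z)=0$ and $H_1C^4(\mathbb Z\,|\,\mathbb Z)=\Tor(\mathbb Z/2,\mathbb Z/2)=\mathbb Z/2$, hence isomorphic to the unique such quadratic functor $\Lambda^2(-\otimes\mathbb Z/2)=L_1SP^2(-\otimes\mathbb Z/2)$, and one only needs $f_1^4$ to be non-zero on the cross-effect, which is clear from $\eta_1(x_1,x_2)=x_1\otimes x_1\gamma_2(x_2)-x_2\otimes x_2\gamma_2(x_1)$.
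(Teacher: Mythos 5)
Your proposal is correct, and its engine --- comparison of the K\"unneth/cross-effect decompositions of source and target, with $\mathbb Z$ as the base case --- is the same as the paper's. What you do differently is to front-load an explicit reduction: you kill the source for all pairs except $(n,i)\in\{(4,1),(6,1),(6,2)\}$ via the bound $L_jSP^m(A\otimes\mathbb Z/p)=0$ for $j\geq m$, and you kill the target for those same pairs by observing that every multi-cross-effect $H_*C^n(\mathbb Z|\cdots|\mathbb Z)$ is a direct sum of iterated derived tensor products $\mathbb Z/a_1\otimes^L\cdots\otimes^L\mathbb Z/a_k$ with $a_j\geq 2$, $\sum a_j=n$, which vanish above homological degree $k-1$ and whenever the $a_j$ have trivial common divisor. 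The paper compresses all of this into ``the proof is similar for other $i,n$'', so your organization makes the easy cases genuinely checkable and, as you note, reproves Proposition \ref{jean}(1) along the way; the cost is that the three surviving pairs still require exactly the paper's argument. On those you correctly isolate the single delicate point, which is also the paper's: the factor $f_0^{4,2}\colon SP^2(-\otimes\mathbb Z/2)\to\Gamma_2(-\otimes\mathbb Z/2)$ is not an isomorphism, but it enters the cross-effects of $H_1C^6$ only through $\Tor(-,\mathbb Z/2)$, where it becomes one because $SP^2(V)$ maps isomorphically onto the $2$-torsion subgroup of $\Gamma_2(V)$ for an $\mathbb F_2$-vector space $V$ (the elements $\gamma_2(v)$ having order $4$). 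The one step you, like the paper, leave at the level of assertion is that the maps $f_i^{n,p}$ induce $\Tor$ of the lower-degree comparison maps on the $\Tor$-terms of the K\"unneth sequences: the paper's diagram (\ref{derhamdia}) and the Eilenberg--Mac Lane shuffle map only treat the tensor part explicitly, and your ``direct verification from the defining formula'' --- decomposing $\eta_i$ over $A\oplus B$ and identifying the boundary terms that produce the $\Tor$-summands --- is where the real remaining work lies in either write-up.
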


\begin{proof}
Given an abelian simplicial group $(G_\bullet,
\partial_i,s_i)$, let $A(G_\bullet)$ be the  associated chain complex
with $A(G_\bullet)_n=G_n$, $d_n=\sum_{i=0}^n(-1)^n\partial_i$.
Recall that given abelian simplicial groups $G_\bullet$ and
$H_\bullet$, the Eilenberg-Mac Lane map
$$
g: A(G_\bullet)\otimes A(H_\bullet)\to A(G_\bullet\otimes
H_\bullet)
$$
is given by
$$
g(a_p\otimes b_q)=\sum_{(p;q)-\text{shuffles}\
(\mu,\nu)}(-1)^{\text{sign}(\mu,\nu)}s_{\nu_q}\dots
s_{\nu_1}(a_p)\otimes s_{\mu_p}\dots s_{\mu_1}(b_q)
$$
For any  free abelian group $A$ and  infinite cyclic group $B$, we
will show that there is a natural commutative diagram with
vertical K\"{u}nneth short exact sequences:
\begin{equation}\label{derhamdia}
 \xymatrix@C= 8pt{\bigoplus_{i+j=\frac{n}{p},\ r+s=k}L_rSP^i(A\otimes
\mathbb Z/p)\otimes L_sSP^j(B\otimes \mathbb Z/p) \ar@{^{(}->}[d]
\ar@{->}[r] & \bigoplus_{i+j=n,\
r+s=k}H_rC^i(A)\otimes H_sC^j(B) \ar@{^{(}->}[d]\\
L_kSP^{\frac{n}{p}}((A\oplus B)\otimes \mathbb Z/p) \ar@{->>}[d]
\ar@{->}[r]^{f_k^{n,p}} & H_kC^n(A\oplus B)
\ar@{->>}[d]\\
\bigoplus_{i+j=\frac{n}{p},\ r+s=k-1}\Tor(L_rSP^i(A), L_sSP^j(B))
\ar@{->}[r] & \bigoplus_{i+j=n,\ r+s=k-1} \Tor(H_rC^i(A),
H_sC^j(B))}
\end{equation}
where all maps are induced by maps $f_*^{*,p}$. Since $B$ is a
cyclic, it is enough to consider the case $s=0$ and summands of
the upper square from (\ref{derhamdia})
\begin{equation}\label{sd22}
\xyma{L_rSP^i(A\otimes \mathbb Z/p)\otimes SP^j(B\otimes \mathbb
Z/p) \ar@{->}[r]^{f_r^{ip,p}\otimes f_0^{jp,p}}
\ar@{^{(}->}[d]^{s_r} & H_rC^{ip}(A\otimes \mathbb Z/p)\otimes
H_0C^{jp}(B\otimes \mathbb Z/p)\ar@{^{(}->}[d]^{h_r}\\
L_rSP^{i+j}((A\oplus B)\otimes \mathbb Z/p)
\ar@{->}[r]^{f_r^{ip+jp,p}} & H_rC^{ip+jp}((A\oplus B)\otimes
\mathbb Z/p)\, ,} \end{equation} where the maps $s_r,h_r$  come
from K\"{u}nneth exact sequences. Consider natural projections
\begin{align*}
& u_1: L_rSP^{r+1}(A\otimes \mathbb Z/p)\otimes
SP^{i-r-1}(A\otimes
\mathbb Z/p)\to L_rSP^i(A\otimes \mathbb Z/p)\\
& u_2: L_rSP^{r+1}((A\otimes B)\otimes \mathbb Z/p)\otimes
SP^{i+j-r-1}((A\oplus B)\otimes \mathbb Z/p)\to
L_rSP^{i+j}((A\oplus B)\otimes \mathbb Z/p)
\end{align*}
The map $s_r$ is defined  by \begin{multline*} s_r:
u_1(\beta_p(a_1,\dots, a_{r+1})\otimes a_{r+2}\dots a_i)\otimes
b_1\dots b_j\mapsto u_2(\beta_p(a_1,\dots,a_{r+2})\otimes
a_{r+2}\dots a_ib_1\dots b_j),\\ a_k\in A\otimes \mathbb Z/p,\
b_l\in B\otimes \mathbb Z/p
\end{multline*}
We have
$$
f_r^{ip,p}\otimes f_0^{jp,p}(u_1(\beta_p(a_1,\dots,
a_{r+1})\otimes a_{r+2}\dots a_i)\otimes b_1\dots
b_j)=\eta_r(a_1,\dots,a_i)\otimes \gamma_p(b_1)\dots\gamma_p(b_j)
$$
and we see that the diagram (\ref{sd22}) is commutative.

The  map $f_i^n$ is an isomorphism for a cyclic group $A$, since
both source and target groups are trivial. For $i=1, n=4,$ and
cyclic $B$, we have a natural diagram
$$
\xyma{L_1SP^2(A\otimes \mathbb Z/2|B\otimes \mathbb
Z/2)\ar@{->}[r] & H_1C^4(A|B)\\
\tor(A\otimes \mathbb Z/2,B\otimes \mathbb Z/2) \ar@{->}[u]^{\wr}
\ar@{->}[r] & \tor(H_0C^2(A),H_0C^2(B)) \ar@{->}[u]^{\wr}}
$$
and the isomorphism $f_1^4$ follows.  The proof is similar for
other $i,n$. The only non-trivial case here is $i=1,n=6$, for the
2-torsion component of $H_1C^6(A)$. In that  case,  the statement
follows from the natural isomorphism
$$
\tor(SP^2(A\otimes \mathbb Z/2),\mathbb Z/2)\to
\tor(\Gamma_2(A\otimes \mathbb Z/2),\mathbb Z/2)
$$
for every free abelian group $A$.
\end{proof}

\begin{remark}
For any free abelian group $A$ and prime number  $p$, there are
canonical isomorphisms
\begin{align*}
& L_{n-1}SP^n(A\otimes \mathbb Z/p)\simeq \Lambda^n(A\otimes \mathbb Z/p)\\
& L_iSP^n(A\otimes \mathbb
Z/p)=\mathrm{coker}\{\Lambda^{i+2}(A\otimes \mathbb Z/p)\otimes
SP^{n-i-2}(A\otimes \mathbb Z/p)\buildrel{\kappa_{i+2}}\over\to\\
& \ \ \ \ \ \ \ \ \ \ \ \ \ \ \ \ \ \ \ \ \ \ \ \ \ \ \ \ \ \ \ \
\ \ \ \ \ \ \ \ \ \ \ \ \ \ \ \ \ \  \Lambda^{i+1}(A\otimes
\mathbb Z/p)\otimes SP^{n-i-1}(A\otimes \mathbb Z/p)\}
\end{align*}
where $\kappa_{i}$ is the corresponding differential in the $n$-th
Koszul complex.
\end{remark}

\noindent When $i=1,n=3$ and $A$ a free abelian group there is a
natural isomorphism
$$
L_1SP^3(A\otimes \mathbb Z/p)\simeq \EuScript L^3(A\otimes \mathbb
Z/p),
$$
where $\EuScript L^3$ is the third Lie functor (see \cite{BM}, for
example). Observe however that the natural map
$$
f_1^8: L_1SP^4(A\otimes\mathbb Z/2)\to H_1C^8(A)
$$
is not an isomorphism. Indeed, every element of
$L_1SP^4(A\otimes\mathbb Z/2)$ is
  2-torsion, whereas  $H_1C^8(A)$ can contain  4-torsion
elements, since its cross-effect $H_1C^8(A|B)$ contains
$\Tor(\Gamma_2(A\otimes\mathbb Z/2),\Gamma_2(B\otimes \mathbb
Z/2))$  as a subgroup. The map $f_1^8$ is given by
$$
\beta_2(\bar a,\bar b)\otimes \bar c\bar d\mapsto a\otimes
a\gamma_2(b)\gamma_2(c)\gamma_2(d)-b\otimes
b\gamma_2(a)\gamma_2(c)\gamma_2(d),\ a,b,c,d\in A.
$$

The following table, which is a consequence  theorem
\ref{theoremdrham}, gives a  complete description of $H_iC^n(A)$
for $n\leq 7$ and $A$   free abelian:
$$
\begin{tabular}{ccccccccccccccc}
 $q$ & \vline & $H_0C^q(A)$ & $H_1C^q(A)$ & $H_2C^q(A)$ & $H_3C^q(A)$\\
 \hline
7 & \vline & $A\otimes \mathbb Z/7$ & 0 & 0 & 0\\
6 & \vline & $\Gamma_2(A\otimes \mathbb Z/3)\oplus \Gamma_3(A\otimes\mathbb Z/2)$ & $\Lambda^2(A\otimes \mathbb Z/3)\oplus \EuScript L^3(A\otimes \mathbb Z/2)$ & $\Lambda^3(A\otimes \mathbb Z/2)$ & 0\\
5 & \vline & $A\otimes \mathbb Z/5$ & 0 & 0 & 0\\
4 & \vline & $\Gamma_2(A\otimes \mathbb Z/2)$ & $\Lambda^2(A\otimes \mathbb Z/2)$ & 0 & 0\\
3 & \vline & $A\otimes \mathbb Z/3$ & 0 & 0 & 0\\
2 & \vline & $A\otimes \mathbb Z/2$ & 0 & 0 & 0\\
\end{tabular}
$$
\noindent For example, the isomorphism
 \bee\label{c4iso}
f: \Lambda^2(A\otimes \mathbb Z/2)\to H_1C^4(A)\ee is defined, for
representatives $a,b\in A$ of $ \bar a,\bar b\in A\otimes \mathbb
Z/2  $, by
$$
f: \bar a\otimes \bar b\mapsto a\otimes a\gamma_2(b)-b\otimes
b\gamma_2(a).
$$

\vspace{.5cm} \noindent{\it Acknowledgement.} The author thanks L.
Breen for various discussions related to the subject of the paper.

\end{document}